\newtheorem{theorem}{Theorem}[section]
\newtheorem{proposition}{Proposition}[section]
\newtheorem{corollary}[theorem]{Corollary}
\title{Bijections in de Bruijn Graphs}
\author{Josef Rukavicka\thanks{Department of Mathematics,
Faculty of Nuclear Sciences and Physical Engineering, CZECH TECHNICAL UNIVERSITY 
IN PRAGUE
(josef.rukavicka@seznam.cz).}}
\newenvironment{remark}[1][Remark]{\begin{trivlist}
\item[\hskip \labelsep {\bfseries #1}]}{\end{trivlist}}
\newenvironment{proof}[1][Proof]{\begin{trivlist}
\item[\hskip \labelsep {\bfseries #1}]}{\end{trivlist}}
\date{\small{February 02, 2016}\\
   \small{Mathematics Subject Classification: 05C30, 05A19}\\
   \small{Keywords: De Bruijn Graph, De Bruijn Sequence, T-net, Bijection}} 
\begin{document}
\maketitle

\begin{abstract}
A T-net of order $m$ is a graph with $m$ nodes and $2m$ directed edges, where every node has indegree and outdegree equal to $2$. (A well known example of T-nets are de Bruijn graphs.)
Given a T-net $N$ of order $m$, there is the so called "doubling" process that creates a T-net $N^*$ from $N$ with $2m$ nodes and $4m$ edges. Let $\vert X\vert$ denote the number of Eulerian cycles in a graph $X$. It is known that $\vert N^*\vert=2^{m-1}\vert N\vert$. In this paper we present a new proof of this identity. Moreover we prove that $\vert N\vert\leq 2^{m-1}$.\\ Let $\Theta(X)$ denote the set of all Eulerian cycles in a graph $X$ and $S(n)$ the set of all binary sequences of length $n$. Exploiting the new proof we construct a bijection $\Theta(N)\times S(m-1)\rightarrow \Theta(N^*)$, which allows us to solve one of Stanley's open questions: we find a bijection between de Bruijn sequences of order $n$ and $S(2^{n-1})$.
\end{abstract}

\section{Introduction}

In 1894, A. de Rivi\`{e}re formulated a question about existence of circular arrangements of $2^n$ zeros and ones in such a way that every word of length $n$ appears exactly once, \cite{RIVIERE}. Let $B_0(n)$ denote the set of all such arrangements. 
(we apply the convention that the elements of $B_0(n)$ are binary sequences that start with $n$ zeros).
The question was solved in the same year by C. Flye Sainte-Marie, \cite{MARIE}, together with presenting a formula for counting these arrangements: $\vert B_0(n)\vert =2^{2^{n-1}-n}$. However the paper was then forgotten. The topic became well known through the paper of N.G. de Bruijn, who proved the same formula for the size of $B_0(n)$, \cite{DEBRUIJN}. Some time after, the paper of C. Flye Sainte-Marie was rediscovered by Stanley, and it turned out that both proofs were principally the same, \cite{DEBRUIJN2}. 

The proof uses a relation between $B_0(n)$ and the set of Eulerian cycles in a certain type of T-nets: A T-net $N$ of order $m$ is defined as a graph with $m$ nodes and $2m$ directed edges, where every node has indegree and outdegree equal to $2$ (a T-net is often referred as a balanced digraph with indegree and outdegree of nodes equal to $2$, see for example \cite{STANLEY_AC}). N.G. de Bruijn defined a doubled T-net $N^*$ of $N$. A doubled T-net $N^*$ of $N$ is a T-net such that:
\begin{itemize}
\item each node of $N^*$ corresponds to an edge of $N$
\item two nodes in $N^*$ are connected by an edge if their corresponding edges in $N$ are incident and the ending node of one edge is the starting node of the second edge.
\end{itemize}
\begin{remark}
We call two edges to be incident if they share at least one common node; the orientation of edges does not matter.
\end{remark}
As a result $N^*$ has $2m$ nodes and $4m$ edges, see an example on Figure \ref{fg_doubling_simple}. (A doubled T-net of $N$ is known as well as a line graph of $N$, \cite{KISHORE}.)

\begin{figure}
\centering
\includegraphics[width=2.0in]{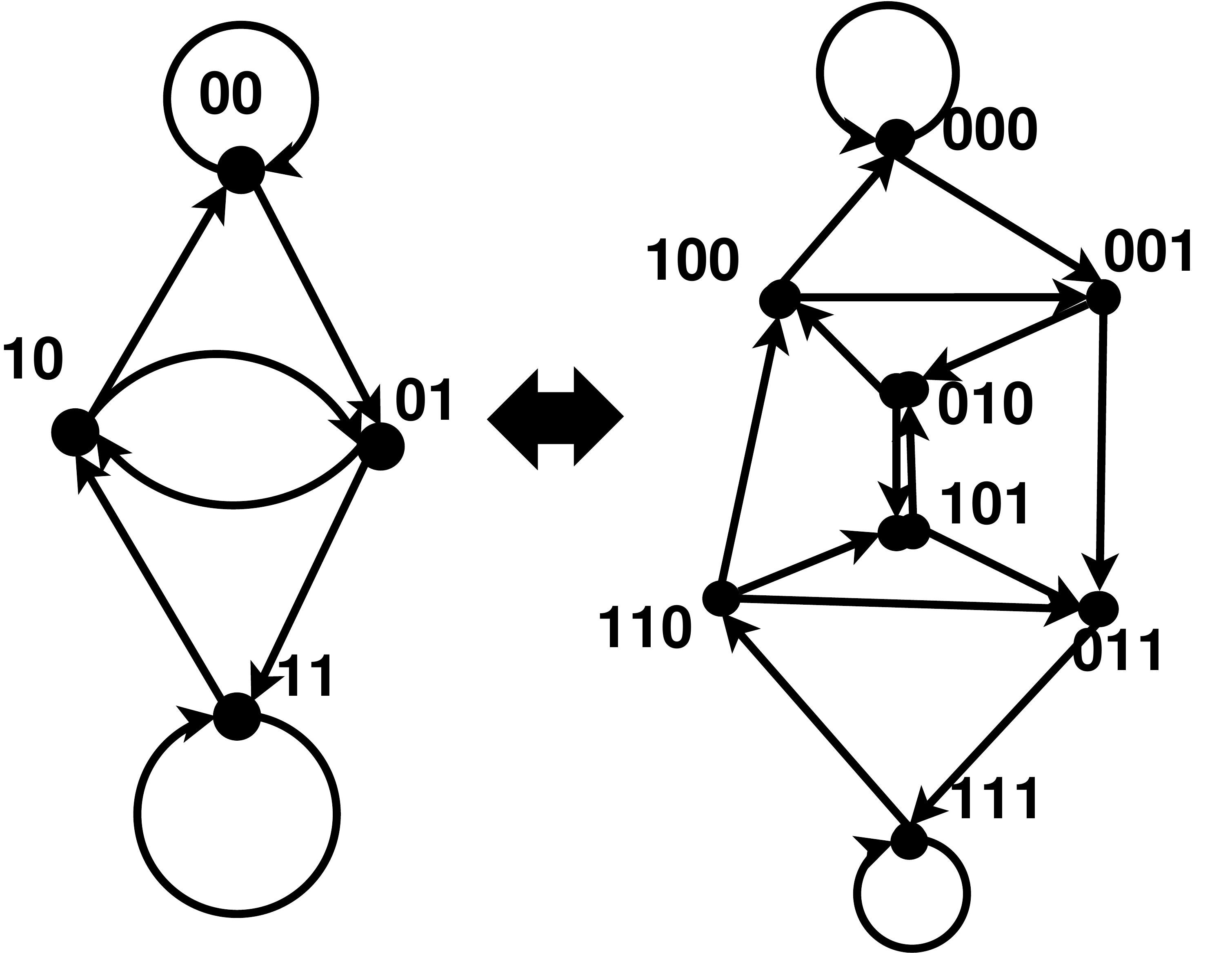} 
\caption{A doubling of a de Bruijn graph: $N$ and $N^*$}
\label{fg_doubling_simple}
\end{figure}

Let $\Theta(X)$ be the set of all Eulerian cycles in $X$ and let $\vert X \vert = \vert \Theta(X)\vert $ denote the number of Eulerian cycles in $X$, where $X$ is a graph. It was proved inductively that $\vert N^*\vert=2^{m-1}\vert N\vert$. Moreover N.G. de Bruijn constructed a T-net (nowadays called a "de Bruijn graph") whose Eulerian cycles are in bijection with the elements of $B_0(n)$. 

A de Bruijn graph $H_n$ of order $n$ is a T-net of order $2^n$, whose nodes correspond to the binary words of length $n-1$. A node $s_1s_2\dots s_{n-1}$ has two outgoing edges to the nodes $s_2\dots s_{n-1}0$ and $s_2\dots s_{n-1}1$. It follows that a node $s_1s_2\dots s_{n-1}$ has two incoming edges from nodes $0s_1s_2\dots s_{n-2}$ and $1s_1s_2\dots s_{n-2}$. Given an edge $e$ going from the node $s_1s_2\dots s_{n-1}$ to the node $s_2\dots s_{n-1}s_n$, then the edge $e$ corresponds to the  word $s_1s_2\dots s_{n-1}s_n$ of length $n$, which implies the natural bijection between Eulerian cycles $\Theta(H_n)$ and binary sequences $B_0(n)$, \cite{DEBRUIJN}. That is why we will write $B_0(n)\equiv \Theta(H_n)$.

De Bruijn graphs found several interesting applications, among others in networking, \cite{BAKER}, and bioinformatics, \cite{PAVEL}, \cite{ZERBINO}.

The important property of de Bruijn graphs is that a doubled T-net of a de Bruijn graph of order $n$ is a de Bruijn graph of order $n+1$, see an example on Figure \ref{fg_doubling_simple} of the de Bruijn graph of order $3$ ($H_3=N$) and of order $4$ ($H_4=N^*)$. Since $\vert B_0(2)\vert=1$ ($B_0(2)=\{0011\}$) it has been derived that $\vert B_0(n)\vert =2^{2^{n-1}-n}$, \cite{BAKER}, \cite{DEBRUIJN}, \cite{DEBRUIJN2}. 

There is also another proof using matrix representation of graphs, \cite{STANLEY_AC}. Yet it was an open question of Stanley, \cite{STANLEY_OP}, \cite{STANLEY_AC}, if there was a bijective proof:
\begin{quote}
Let $B(n)$ be the set of all binary de Bruijn sequences of
order $n$, and let $S(n)$ be the set of all binary sequences of length $n$. Find an explicit bijection $B(n) \times  B(n)\rightarrow S(2^n)$.
\end{quote}
This open question was solved in 2009, \cite{KISHORE}, \cite{STANLEY_AC}. 

\begin{remark}
In the open question of Stanley, $B(n)$ denotes the de Bruijn sequences that do not necessarily start with $n$ zeros like in the case of $B_0$. $B(n)$ contains all $2^n$ "circular rotations" of all sequences from $B_0(n)$; formally, given $s=s_1s_2\dots s_{2^n}\in B_0(n) $, then $s_is_{i+1}\dots s_{2^n}s_1s_2\dots s_{i-1}\in B(n)$, where $1\leq i \leq 2^n$. It is easy to see that all these $2^n$ "circular rotations" are distinct binary sequences. It follows that $\vert B(n)\vert=2^n\vert B_0(n)\vert$. Hence it is enough to find a bijection $B_0(n)\rightarrow S(2^{n-1}-n)$ to solve this open question.
\end{remark}

In this paper we present a new proof of the identity $\vert N^*\vert=2^{m-1}\vert N\vert$, which allows us to prove that $\vert N\vert\leq 2^{m-1}$ and to construct a bijection $\nu : \Theta(N)\times S(m-1)\rightarrow \Theta(N^*)$ and consequently to present another solution to the Stanley's open question: We define $\rho_2(\epsilon)=0011$ (recall that $B_0(2)=\{0011\}$) and 
let $\rho_{n} : S(2^{n-1}-n) \rightarrow B_0(n)$ be a map defined as $\rho_n(s)=\nu(\rho_{n-1}(\dot s),\ddot s)$, where $\epsilon$ is the binary sequence of length $0$,
 $n>2$, $s=\dot s\ddot s$, $\dot s \in S(2^{n-2}-(n-1))$, and  $\ddot s \in S(2^{n-2}-1)$. 
\begin{proposition}
The map $\rho_n$ is a bijection.
\end{proposition}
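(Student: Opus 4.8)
The plan is to argue by induction on $n$, exhibiting $\rho_n$ as a composition of maps each of which is already known to be a bijection. The base case $n=2$ is immediate: the domain $S(2^{2-1}-2)=S(0)$ consists of the single empty sequence $\epsilon$, the codomain $B_0(2)=\{0011\}$ is a singleton, and $\rho_2(\epsilon)=0011$ matches the unique element, so $\rho_2$ is trivially a bijection.

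Before the inductive step I would record the index bookkeeping that makes the recursion well typed. Since $H_{n-1}$ is a T-net of order $m=2^{n-2}$ and its doubling is $H_n$, the bijection $\nu$ from the previous section instantiates to $\nu:\Theta(H_{n-1})\times S(2^{n-2}-1)\to\Theta(H_n)$, that is, $\nu:B_0(n-1)\times S(2^{n-2}-1)\to B_0(n)$ under the identifications $B_0(k)\equiv\Theta(H_k)$. The splitting $s=\dot s\ddot s$ with $\dot s\in S(2^{n-2}-(n-1))$ and $\ddot s\in S(2^{n-2}-1)$ respects lengths because $(2^{n-2}-(n-1))+(2^{n-2}-1)=2^{n-1}-n$, so every $s\in S(2^{n-1}-n)$ decomposes uniquely; this confirms that $\rho_{n-1}(\dot s)\in B_0(n-1)$ is a legitimate first argument of $\nu$ and that $\rho_n$ indeed lands in $B_0(n)$.

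Assuming $\rho_{n-1}$ is a bijection, I would factor $\rho_n=\nu\circ\phi$, where $\phi:S(2^{n-1}-n)\to B_0(n-1)\times S(2^{n-2}-1)$ is given by $\phi(s)=(\rho_{n-1}(\dot s),\ddot s)$. The map $\phi$ is itself a composition of two bijections: the length-splitting $s\mapsto(\dot s,\ddot s)$, a bijection $S(2^{n-1}-n)\to S(2^{n-2}-(n-1))\times S(2^{n-2}-1)$ because concatenating a prefix and a suffix of fixed lengths is invertible, followed by applying the inductive-hypothesis bijection $\rho_{n-1}$ in the first coordinate and the identity in the second (a product of bijections is a bijection). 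Hence $\phi$ is a bijection, and since $\nu$ is a bijection, so is the composite $\rho_n$, closing the induction.

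The argument is essentially formal, with the substantive content already carried by the construction of $\nu$; the one place demanding care is the instantiation step, namely verifying that the generic bijection $\nu$ is applied to the correct T-net $H_{n-1}$ (whose doubled net is $H_n$) and that the induced constraint $m-1=2^{n-2}-1$ on the length of $\ddot s$ is exactly the one recorded in the definition of $\rho_n$. Once this matching of orders and sequence lengths is checked, nothing further is needed.
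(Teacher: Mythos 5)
Your proposal is correct and follows essentially the same route as the paper: induction on $n$ with the trivial base case $B_0(2)=\{0011\}$, the length bookkeeping $(2^{n-2}-(n-1))+(2^{n-2}-1)=2^{n-1}-n$ to justify that the recursion is well typed, and the fact that $\nu$ is a bijection (Proposition \ref{pr_bij_doublig_tnet}) to close the inductive step. You merely make explicit the factorization of $\rho_n$ into a splitting bijection, a product of bijections, and $\nu$, which the paper leaves as ``it is easy to see.''
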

\begin{proof}
Note that $\dot s \in S(2^{n-2}-(n-1))$ and $\vert B_0(n-1)\vert=2^{(n-1)-1}-(n-1)=2^{n-2}-(n-1)$; thus $\dot s$ is a valid input for the function $\rho_{n-1}$ and $\rho_{n-1}(\dot s)\in B_0(n-1)\equiv \Theta(H_{n-1})$. In addition, $H_{n-1}$ has $m=2^{n-2}$ nodes and $\ddot s\in S(2^{n-2}-1)$ has the length $m-1$, hence it makes sense to define $\rho_n(s)=\nu(\rho_{n-1}(\dot s),\ddot s)$. Because $\nu$ is a bijection, see Proposition \ref{pr_bij_doublig_tnet}, it is easy to see by induction on $n$ that $\rho_n$ is a bijection as well.
\end{proof}
\begin{remark}
Less formally said, the bijection $\rho_n(s)$ splits the binary sequence $s$ into two subsequences $\dot s$ and $\ddot s$. Then the bijection $\rho_{n-1}$ is applied to $\dot s$, the result of which is a de Bruijn sequence $p$ from $B_0(n-1)$ (and thus an Eulerian cycle in $H_{n-1}$). Then the bijection $\nu$ is applied to $p$ and $\ddot s$. The result is a de Bruijn sequence from $B_0(n)$.
\end{remark}

\section{A double and quadruple of a T-net}

\noindent
Let $Y$ be a set of graphs; we define 
$\Theta(Y)=\bigcup_{X\in Y}\Theta(X)$ (the union of sets of Eulerian cycles in graphs from $Y$) and $\vert Y\vert =\sum_{X\in Y}\vert X\vert$ (the sum of the numbers of Eulerian cycles). Let $U(X)$ denote the set of nodes of a graph $X$.

\begin{figure}
\centering
\includegraphics[width=2.0in]{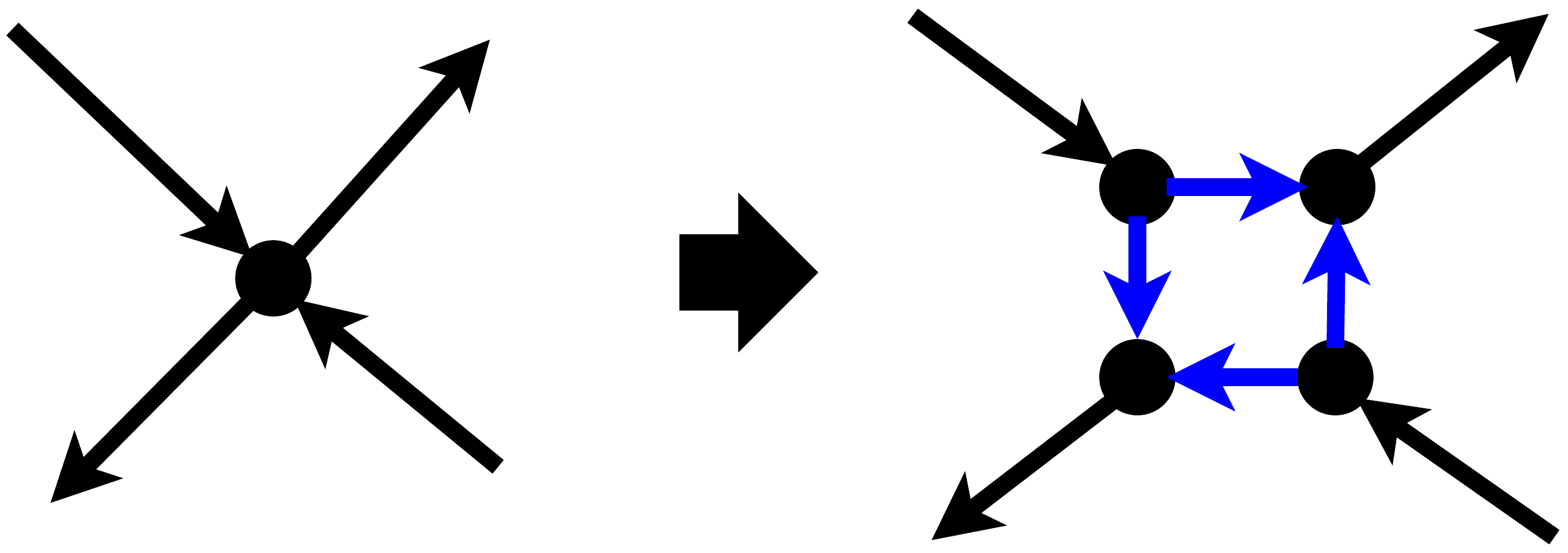} 
\caption{A node replacing by $4$ nodes and $4$ edges}
\label{fg_node_replace}
\end{figure} 

We present a new way of constructing a doubled T-net, which will enable us to show a new non-inductive proof of the identity $\vert N^{*}\vert=2^{m-1}\vert N\vert$ and to prove $\vert N\vert\leq 2^{m-1}$. 
\begin{figure}
\centering
\includegraphics[width=2.0in]{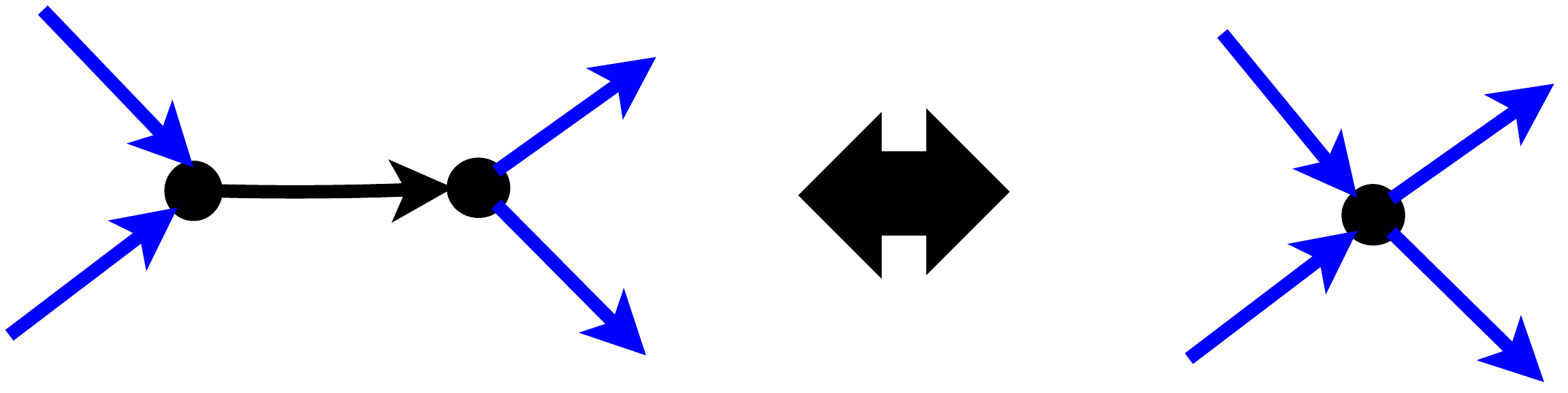} 
\caption{A removing black edges and fusion of nodes}
\label{fg_node_fusion}
\end{figure}

We introduce a quadruple of $N$  denoted by $\hat N$: The quadruple $\hat N$ arises from $N$ by replacing every node $a\in U(N)$ by 4 nodes and 4 edges as depicted on the Figure \ref{fg_node_replace}. Let $\Gamma(a)$ denote the set of these 4 nodes and $\Pi(a)$ denote the set of these 4 edges that have replaced the node $a$. The edges from $\Pi(a)$ are in blue color on the figures and we will distinguish blue and black edges as follows: In a graph containing at least one blue edge, we define an Eulerian cycle to be a cycle that traverses all blue edges exactly once and all black edges exactly twice, see Figure \ref{fg_doubling}. 

\begin{figure}
\centering
\includegraphics[width=3.0in]{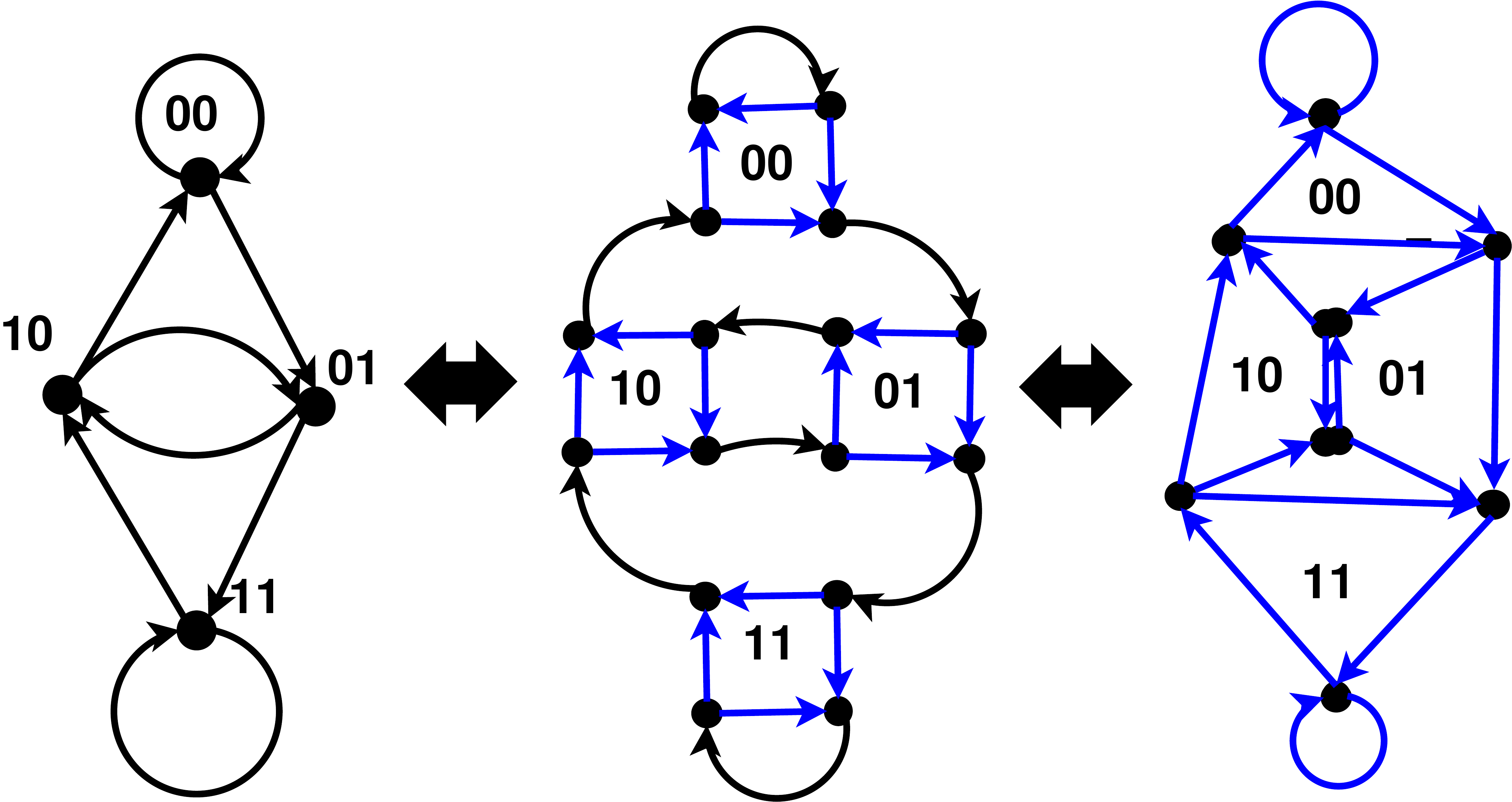} 
\caption{An example of $N$, $\hat N$, and $N^*$}
\label{fg_doubling}
\end{figure}

\begin{remark}
Note that a quadruple $\hat N$ is not a T-net, since the indegree and outdegree are not always equal to $2$. But since  the black edges can be traversed twice, we can consider them as parallel edges (two edges that are incident to the same two nodes). Then it would be possible to regard $\hat N$ as a T-net.
\end{remark}

By removing black edges and "fusing" their incident nodes into one node in $\hat N$ (as depicted on Figure \ref{fg_node_fusion}), we obtain a doubled T-net $N^{*}$ of $N$. And the reverse process yields $\hat N$ from $N^*$: turn all edges from black to blue and then replace every node by two nodes connected by one black edge, where one node has two outgoing blue edges and one incoming black edge and the second node two incoming edges and one outgoing black edge. Thus we have a natural bijection between Eulerian cycles in $\hat N$ and $N^*$. See an example on Figure \ref{fg_doubling}. 
\begin{remark}
If all edges in a graph are in one color, then it makes no difference if they are black or blue. An Eulerian cycle traverses in that case just once every edge.
\end{remark}

\begin{figure}
\centering
\includegraphics[width=3.0in]{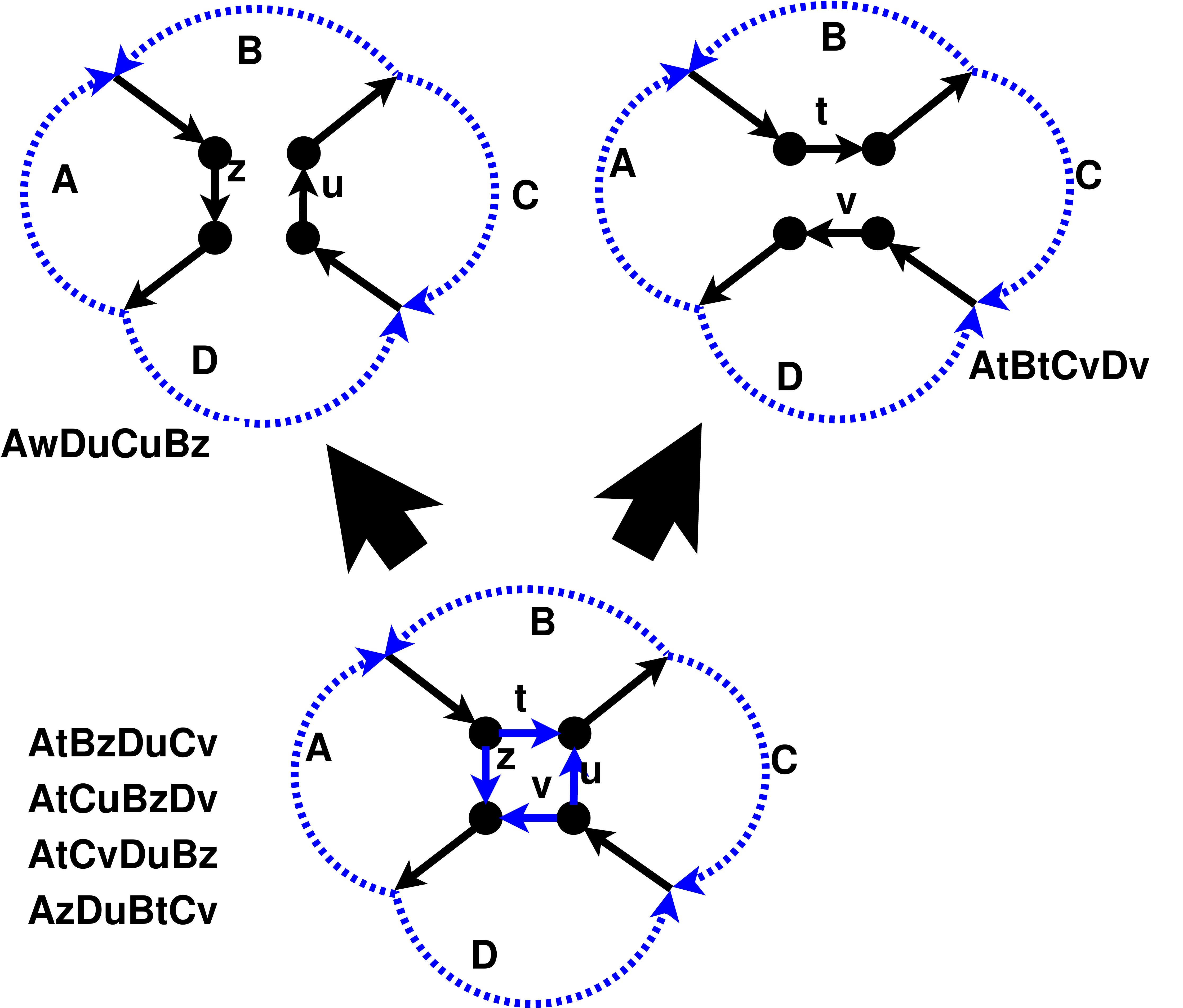} 
\caption{Edges replacement. Case I}
\label{fg_edges_repl_a}
\end{figure}
\begin{figure}
\centering
\includegraphics[width=4.0in]{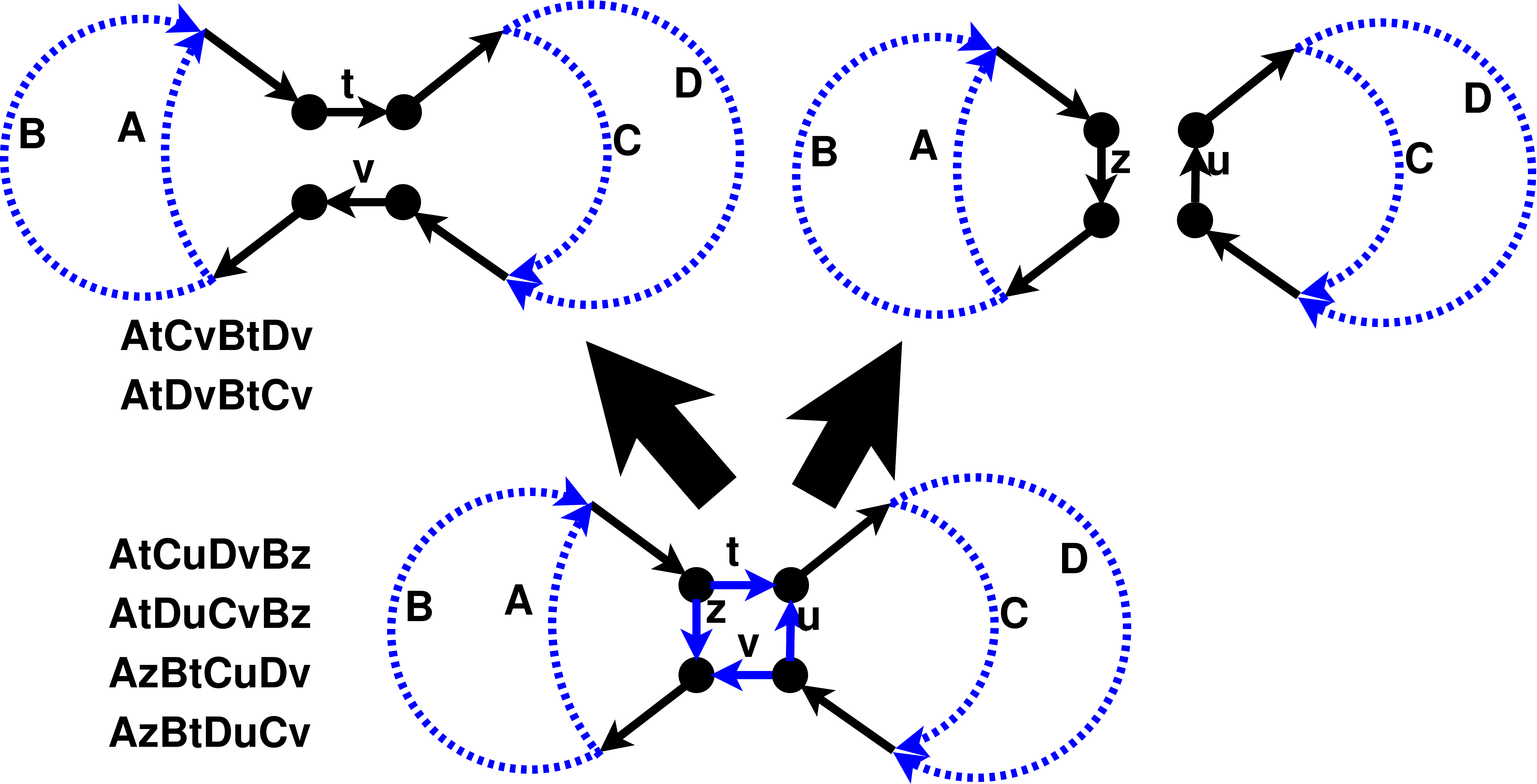} 
\caption{Edges replacement. Case II}
\label{fg_edges_repl_b}
\end{figure}

Fix an order on nodes $U(N)$. As a result we have a bijection $\phi: \{1,2,\dots ,m\} \rightarrow U(N)$.
Given $i\in \{1,2,\dots ,m\}$, let us denote the edges from $\Pi(\phi(i))$ by $t,u,v,z$, in such a way that $t$ and $v$ are not incident edges; it follows that $u$ and $z$ are not incident as well. 

Let $W_0=\{\hat N\}$, we define $W_i=\{\dot w,\ddot w\mid w\in W_{i-1}\}$, where $i\in\{1,2,\dots ,m\}$  and $\dot w$, $\ddot w$ are defined as follows: 
We construct the graph $\dot w$ by removing edges $t,v$ from $w$ and by changing the color of $u,z$ from blue to black (thus allowing the edges $u,z$ to be traversed twice). Similarly we construct $\ddot w$ from $w$ by removing edges $u,z$ and by changing the color of $t,v$ from blue to black, where $t,u,v,z\in \Pi(\phi(i))$. 

The crucial observation is:
\begin{proposition}
\label{pr_half_of_euler_paths}
Let $w\in W_i$, where $i\in\{0,1,\dots ,m-2\}$. Then 
$\vert w \vert= 2\vert \dot w\vert + 2\vert \ddot w\vert$.
\end{proposition}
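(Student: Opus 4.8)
The plan is to reduce the whole statement to a local analysis at the four blue edges $t,u,v,z\in\Pi(\phi(i+1))$ of $w$ together with the four black edges adjacent to them. Because $\phi(i+1)$ has not been processed in $w$, these four edges are still blue, while each of the four adjacent black edges must be traversed twice; hence every Eulerian cycle of $w$ crosses this gadget in exactly four \emph{passes}, using $t,u,v,z$ once each. Passing to $\dot w$ deletes $t,v$ and fuses $u,z$ into black edges, so the gadget degenerates into two passes that each use $u$ and $z$ twice; passing to $\ddot w$ does the symmetric thing with $t,v$. First I would record these three local pictures explicitly — this is what Figures~\ref{fg_edges_repl_a} and \ref{fg_edges_repl_b} encode — so that an Eulerian cycle of $w$, $\dot w$, or $\ddot w$ is pinned down by its part outside the gadget together with a finite local pattern.

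I would then build a two-to-one surjection $\Theta(w)\to\Theta(\dot w)\sqcup\Theta(\ddot w)$, which gives $\vert w\vert=2\vert\dot w\vert+2\vert\ddot w\vert$ at once. In the direction easiest to picture, an Eulerian cycle of $\ddot w$ runs through $t$ twice and $v$ twice; I lift it to $w$ by rerouting one of its two $t$-passes onto $u$ and one of its two $v$-passes onto $z$, re-threading through the gadget in the only orientation-consistent way, after which $t,u,v,z$ are each used once. The binary choice of which pair of passes to reroute is the source of the factor $2$; the cycles of $w$ obtained this way form one family, those lifted from $\dot w$ form a complementary family, and I would check that the two families are disjoint and cover $\Theta(w)$, so that reading the construction backwards gives the desired two-to-one map.

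The main obstacle is connectivity. Rerouting a single directed closed walk is a $2$-opt move, which generically breaks it into two cycles, so it is not automatic either that a lift stays a single Eulerian cycle or that exactly two of the a priori $2\times2$ rerouting choices are admissible. The heart of the proof is therefore a lemma stating that, read cyclically along the resolution-cycle, the two $t$-passes and the two $v$-passes occur in one of only two cyclic patterns (the two cases of Figures~\ref{fg_edges_repl_a}--\ref{fg_edges_repl_b}), and that a rerouting closes up into a single cycle exactly when the rerouted $t$-pass and $v$-pass are cyclically adjacent among these four passes; in each pattern precisely two of the pairings are adjacent, which both produces the factor $2$ and, read in reverse, shows that every Eulerian cycle of $w$ determines uniquely the resolution it maps to and the pass it came from. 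Establishing that these are the only two patterns uses the hypothesis $i\le m-2$, so that the gadget sits inside a genuinely larger balanced graph; once the adjacency lemma is in place the identity follows, and summing over $w\in W_i$ upgrades it to $\vert W_i\vert=2\vert W_{i+1}\vert$, which drives the rest of the argument.
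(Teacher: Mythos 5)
Your overall strategy --- localize at the gadget $\Pi(\phi(i+1))$, cut every Eulerian cycle at its four passes through the gadget, and compare counts pattern by pattern --- is exactly the paper's (and de Bruijn's) argument, and the framework is sound: the four outside segments have the same terminal profile for $w$, $\dot w$ and $\ddot w$, so a class-by-class comparison is legitimate, and $i\le m-2$ is indeed what forbids the fully degenerate situation (it guarantees a blue edge outside the gadget, hence rules out $A=B$ and $C=D$ holding simultaneously). The gap is in your central adjacency lemma, which is false as stated. Take the case II cycle $AtCvBtDv$ of $\ddot w$ (Figure \ref{fg_edges_repl_b}): its four passes occur in the alternating cyclic pattern $t,v,t,v$, so \emph{all four} of the $2\times 2$ pairings of a $t$-pass with a $v$-pass are cyclically adjacent, yet this cycle can have only two lifts if your map is to be two-to-one (the four cycles of $w$ in this class must be split between the two cycles of $\ddot w$). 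So ``in each pattern precisely two of the pairings are adjacent'' fails; adjacency happens to give the right count $2$ in the blocked pattern $t,t,v,v$ of case I, which is probably what suggested the criterion, but it overcounts in the alternating pattern.

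The deeper problem is that ``rerouting one $t$-pass onto $u$ and one $v$-pass onto $z$'' is not a well-defined operation. Exchanging the exits of two passes of a single closed walk while keeping all other successors is a transposition of the successor permutation, and such a move \emph{always} splits one cycle into two: rerouting even the adjacent pair $(t_2,v_1)$ in the case I cycle $At_1Bt_2Cv_1Dv_2$ with minimal re-gluing produces the disconnected pair $At_1BzDv_2$ and $Cu$. To recover a single cycle you must also re-thread the outside segments at the exit nodes, and since two segments leave each exit node this re-threading is not unique; counting the admissible re-threadings for each terminal profile of $\{A,B,C,D\}$ is precisely the content of Figures \ref{fg_edges_repl_a} and \ref{fg_edges_repl_b}, and is why the paper (in its later definition of $\omega_{N,i}$) specifies the correspondence by an explicit case list rather than by a uniform rerouting rule. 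The repair is to drop the rerouting map and argue as the paper does: fix the outside decomposition and enumerate the single-cycle interleavings directly, obtaining $4$ for $w$ against $1+1$ for $\dot w,\ddot w$ (case I), $4$ against $0+2$ (case II), and $2$ against $1$ when exactly one of $A=B$, $C=D$ holds; summing over decompositions gives $\vert w\vert = 2\vert\dot w\vert + 2\vert\ddot w\vert$.
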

\begin{remark}
The following proof is almost identical to the one in \cite{DEBRUIJN}, where the author constructed two graphs $d_1, d_2$ from a graph $d$ and proved that $\vert d\vert = 2\vert d_1\vert + 2\vert d_1\vert$
\end{remark}
\begin{proof} 
Given an Eulerian cycle $g$ in $w$, then split $g$ in four paths $A,B,C,D$ and edges $t,u,v,z \in \Pi(\phi(i))$. We will count the number of Eulerian cycles in $\dot w, \ddot w$ that are composed from all 4 paths $A,B,C,B$ and that differ only in their connections on edges $t,u,v,z$.
Exploiting the N.G. de Bruijn's notation, all possible cases are depicted on Figures \ref{fg_edges_repl_a} and \ref{fg_edges_repl_b}. 
\begin{itemize}
\item In case I, the graph $w$ contains $4$ Eulerian cycles: AtBzDuCv, AtCuBzDv, AtCvDuBz, AzDuBtCv; whereas the graphs $\dot w$ and $\ddot w$ have together $2$ Eulerian cycles: AzDuCuBz and AtBtCvDv. Thus $\vert w\vert=4$ and $\vert \dot w\vert$+$\vert \ddot w\vert=2$.
\item In case II, the graph $w$ contains $4$ Eulerian cycles: AtCuDvBz, AtDuCvBz, AzBtCuDv, AzBtDuCv; whereas the graph $\ddot w$ has $2$ Eulerian cycles: AtCvBtDv, AtDvBtCv. The graph $\dot w$ is disconnected and therefore $\dot w$ has $0$ Eulerian cycles. Thus $\vert w\vert=4$ and $\vert \dot w\vert$+$\vert \ddot w\vert=2$.
In case II, it is possible the $A=B$ or $C=D$. In such a case, $\vert w\vert=2$ and $\vert \dot w\vert$+$\vert \ddot w\vert=1$.
\end{itemize}
This ends the proof.
\end{proof}

\noindent
We define $\Delta = \{w \mid w\in W_m \mbox{ and $w$ is connected}\}$. The Figure \ref{fg_tree} shows an example of all iterations and construction of graphs in $\Delta$ from the graph $\hat N$, where $N$ is a de Bruijn graph of order $3$. The order of nodes from $N$ is $00<10<01<11$. Most of the disconnected graphs are ommited.
 
\begin{figure}
\centering
\includegraphics[width=4.0in]{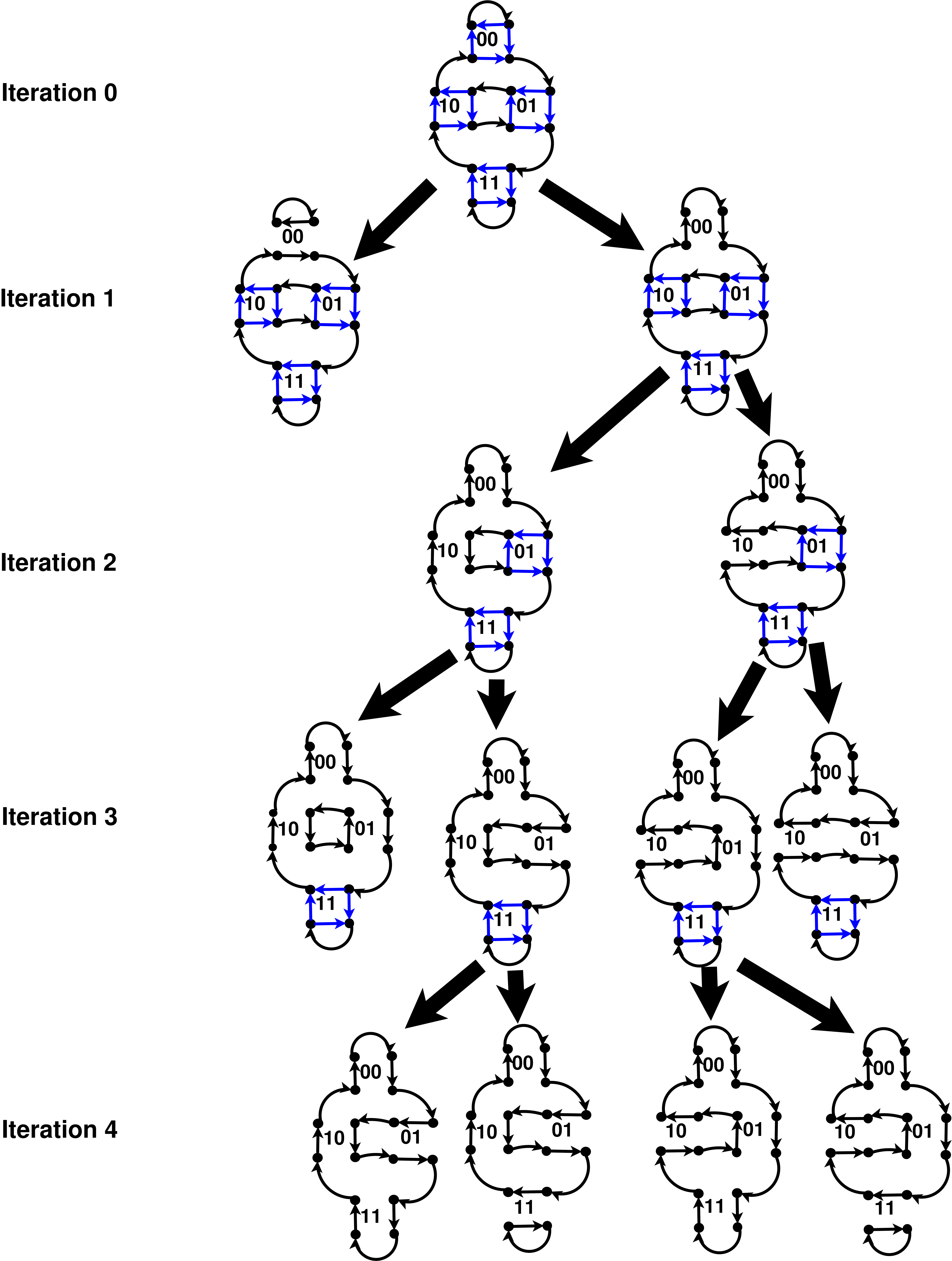} 
\caption{Constructing the set $\Delta$ from $\hat N$}
\label{fg_tree}
\end{figure}

\begin{remark}
In the previous proof in case II, it can happen that $A=B$ or $C=D$. Note in the iteration step $i=m$ (when constructing $W_m$ from $W_{m-1}$) it holds that $A=B$ and $C=D$, because all nodes have indegree and outdegree equal to $1$ with exception of nodes $\Gamma(\phi(m-1))$. Hence $\vert W_{m-1}\vert = \vert W_{m}\vert$. It follows as well that every connected graph $w\in W_{m-1}$ has exactly one Eulerian cycle. That is why in the Proposition \ref{pr_half_of_euler_paths} we consider $i\in\{0,1,\dots ,m-2\}$.
\end{remark}

\begin{corollary}
\label{cor_rel_iterated_sets}
$2\vert W_{i-1}\vert = \vert W_i\vert$ and $\vert W_{m-1}\vert = \vert W_m\vert$, where $i\in \{1,2,\dots ,m-1\}$.
\end{corollary}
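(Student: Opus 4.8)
The plan is to derive the two equalities directly from the recursion $W_i=\{\dot w,\ddot w\mid w\in W_{i-1}\}$, treating the doubling range $i\le m-1$ and the terminal index $i=m$ by separate arguments.

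For $2|W_{i-1}|=|W_i|$ with $1\le i\le m-1$, I would show that each $w\in W_{i-1}$ yields exactly two graphs in $W_i$ and that distinct sources never produce a common graph, so that the number of graphs doubles. The graphs $\dot w$ and $\ddot w$ are distinct because they disagree on the gadget $\Gamma(\phi(i))$: the operation $w\mapsto\dot w$ deletes $t,v$ and recolours $u,z$, whereas $w\mapsto\ddot w$ deletes $u,z$ and recolours $t,v$, so the surviving edges of $\Pi(\phi(i))$ differ. For the cross terms I would use that every member of $W_{i-1}$ still carries the intact blue gadgets $\Gamma(\phi(j))$ for all $j\ge i$; hence two members of $W_{i-1}$ can differ only on the already-processed gadgets $\Gamma(\phi(j))$ with $j<i$, and since both operations alter only the gadget $\Gamma(\phi(i))$, these earlier differences pass unchanged to the offspring. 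Thus the assignment sending a parent together with its choice of surviving edge-pair to the corresponding offspring is injective with image all of $W_i$, which gives $|W_i|=2|W_{i-1}|$.

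For $|W_{m-1}|=|W_m|$ the terminal gadget forces the doubling to stop. As the remark above records, in any $w\in W_{m-1}$ every node except the four nodes of $\Gamma(\phi(m))$ has in- and out-degree $1$, so the black part of $w$ reduces to two directed paths, one issuing from each exit node $F_1,F_2$ of the gadget and ending at one of the entry nodes $E_1,E_2$. I would make a short case split on how these two paths match exits to entries; in each case exactly one of the two resolutions of $\Gamma(\phi(m))$ (the one keeping $t,v$, or the one keeping $u,z$) reconnects $w$ into a single closed trail, while the other splits it into two disjoint cycles. Hence at the final step the two operations are no longer symmetric: only one resolution preserves the Eulerian trail carried by $w$, so the passage from $W_{m-1}$ to $W_m$ does not increase the count and $|W_{m-1}|=|W_m|$.

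The main obstacle is precisely this terminal step. For every earlier index the two operations are interchangeable and the count manifestly doubles, so the first equality is a routine injectivity--surjectivity check. The content lies in verifying that the enforced degeneracy $A=B$, $C=D$ of Case II --- unavoidable once $\Gamma(\phi(m))$ is the only node-cluster of degree exceeding $1$ --- really does break the symmetry between the two resolutions, so that the number is preserved rather than doubled. Care is needed to keep track of the convention that black edges are traversed twice while the last surviving blue edges are traversed once; this bookkeeping is exactly what upgrades the Case II analysis of the earlier Proposition into the equality $|W_{m-1}|=|W_m|$ at the final stage.
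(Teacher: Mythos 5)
Your second half is essentially the paper's argument, but your first half proves the wrong statement, because you have misread what $\vert W_i\vert$ means. At the start of Section 2 the paper defines, for a set of graphs $Y$, $\vert Y\vert=\sum_{X\in Y}\vert X\vert$ --- the \emph{total number of Eulerian cycles} over the members of $Y$, not the number of graphs in $Y$. The corollary is therefore a statement about Eulerian-cycle totals, and the paper's proof is simply to sum Proposition \ref{pr_half_of_euler_paths}, $\vert w\vert=2\vert\dot w\vert+2\vert\ddot w\vert$, over all $w\in W_{i-1}$. (Note that this summation actually yields $\vert W_{i-1}\vert=2\vert W_i\vert$, i.e.\ the totals \emph{halve} at each of the first $m-1$ steps; the printed form $2\vert W_{i-1}\vert=\vert W_i\vert$ has the factor on the transposed side, and the halving direction is the one the paper subsequently uses to conclude $\vert\hat N\vert=\vert W_0\vert=2^{m-1}\vert\Delta\vert$.) Your argument instead counts \emph{graphs}: you show $\dot w\neq\ddot w$ and that distinct parents have distinct offspring, so the cardinality of the set $W_i$ doubles. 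That is true, but it is true for every $i$ including $i=m$ --- your injectivity argument nowhere uses $i\leq m-1$, so it could not possibly single out the terminal step --- and it implies nothing about cycle totals, because it ignores the multiplicities $\vert w\vert$: many offspring are disconnected and carry zero Eulerian cycles. Proposition \ref{pr_half_of_euler_paths}, which is the entire engine of this corollary, never enters your first part.

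The two halves of your proposal also use incompatible readings of $\vert\cdot\vert$: under the graph-count reading of your first part, the second equality is \emph{false}, since as a set $W_m$ has twice as many members as $W_{m-1}$ (the disconnected resolutions are still elements). You silently switch to counting Eulerian trails for the second equality, and there your substance does agree with the paper's remark: in any $w\in W_{m-1}$ only the nodes of the last unresolved gadget have degree exceeding one, this forces $A=B$ and $C=D$ in Case II, each connected $w$ carries exactly one Eulerian cycle, and exactly one of the two resolutions stays connected and inherits it, giving $\vert w\vert=\vert\dot w\vert+\vert\ddot w\vert$ with both sides equal to $1$; you should also state that disconnected $w\in W_{m-1}$ contribute $0$ to both sides. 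To repair the proposal, use the cycle-total semantics throughout: for $i\in\{1,\dots,m-1\}$ sum Proposition \ref{pr_half_of_euler_paths} over $W_{i-1}$, and for $i=m$ use the terminal identity just described.
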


\begin{proposition}
$2^{m-1}\vert \Delta \vert=\vert N^{*}\vert = \vert \hat N \vert$.
\end{proposition}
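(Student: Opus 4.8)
The statement is a chain of two equalities, and the plan is to establish each separately. The right-hand equality $\vert N^*\vert=\vert\hat N\vert$ is in fact already available: deleting the black edges of $\hat N$ and fusing their incident nodes, together with the explicit reverse process, was shown to give a natural bijection between the Eulerian cycles of $\hat N$ and those of $N^*$. Consequently the two graphs carry the same number of Eulerian cycles, and I would simply invoke this bijection, so that the whole burden falls on the left-hand equality $2^{m-1}\vert\Delta\vert=\vert\hat N\vert$.

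For that equality I would telescope Proposition~\ref{pr_half_of_euler_paths}. First I would sum the identity $\vert w\vert=2\vert\dot w\vert+2\vert\ddot w\vert$ over all $w\in W_i$; using that each element of $W_{i+1}=\{\dot w,\ddot w\mid w\in W_i\}$ arises as exactly one descendant, this gives
\[
\vert W_i\vert=\sum_{w\in W_i}\vert w\vert=2\sum_{w\in W_i}\bigl(\vert\dot w\vert+\vert\ddot w\vert\bigr)=2\vert W_{i+1}\vert
\]
for every $i\in\{0,1,\dots,m-2\}$, which is the relation recorded in Corollary~\ref{cor_rel_iterated_sets}. Chaining these $m-1$ equalities from $i=0$ up to $i=m-2$ then yields $\vert W_0\vert=2^{m-1}\vert W_{m-1}\vert$, and since $W_0=\{\hat N\}$ its left-hand side is exactly $\vert\hat N\vert$.

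It remains to identify $\vert W_{m-1}\vert$ with $\vert\Delta\vert$, and here I would use two facts already observed about the final stage. First, at the last step every node outside one gadget has indegree and outdegree $1$, which forces $A=B$ and $C=D$ and hence $\vert W_{m-1}\vert=\vert W_m\vert$; this degeneracy is precisely the reason Proposition~\ref{pr_half_of_euler_paths} is restricted to $i\le m-2$. Second, $\Delta$ is nothing but $W_m$ with its disconnected members discarded, and a disconnected graph has no Eulerian cycle and so contributes $0$ to the sum defining $\vert W_m\vert$; therefore $\vert W_m\vert=\sum_{w\in W_m}\vert w\vert=\sum_{w\in\Delta}\vert w\vert=\vert\Delta\vert$. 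Combining the pieces gives $\vert\hat N\vert=2^{m-1}\vert W_{m-1}\vert=2^{m-1}\vert W_m\vert=2^{m-1}\vert\Delta\vert$, which together with $\vert\hat N\vert=\vert N^*\vert$ is the assertion.

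Since the real work is carried by Proposition~\ref{pr_half_of_euler_paths} and by the structural remark on the last stage, I do not anticipate a genuine obstacle so much as two bookkeeping points to watch. The first is pinning down the exponent: the factor $2^{m-1}$ rather than $2^m$ appears exactly because the halving relation holds only for the first $m-1$ transitions, the final step $W_{m-1}\to W_m$ contributing no factor of $2$. The second is the passage from the per-cycle identity to the set-level identity $\vert W_i\vert=2\vert W_{i+1}\vert$: this requires that summing $\vert\dot w\vert+\vert\ddot w\vert$ over $w\in W_i$ genuinely reproduces $\vert W_{i+1}\vert$, i.e.\ that the descendant graphs are counted with the correct multiplicity, so I would verify that the construction yields each $\dot w$ and $\ddot w$ as a distinct object and that no over-counting or cancellation can occur.
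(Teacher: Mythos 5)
Your proposal is correct and follows essentially the same route as the paper: the paper likewise reduces the claim to Corollary~\ref{cor_rel_iterated_sets} together with $\vert\hat N\vert=\vert W_0\vert$, the observation that only the connected graphs of $W_m$ (i.e.\ the members of $\Delta$) contribute Eulerian cycles, and the already-established bijection $\Theta(\hat N)\leftrightarrow\Theta(N^*)$. Your only additions are to re-derive the corollary by summing Proposition~\ref{pr_half_of_euler_paths} over $W_i$ and to flag the multiplicity issue in that sum, both of which the paper takes for granted.
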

\begin{proof}
The only graphs in $W_m$ that contain an Eulerian cycle are connected graphs, it means only graphs from $\Delta$. On the other hand every graph $w\in \Delta$ contains exactly one Eulerian cycle, since every node has indegree and outdegree equal to $1$.
The proposition follows then from Corollary \ref{cor_rel_iterated_sets}, because $\vert \hat N\vert=\vert W_0\vert$ (recall that $W_0=\{\hat N\}$).
\end{proof}

\begin{proposition}
\label{pr_bij_n_delta}
There is a bijection between $\Theta(N)$ and $\Theta(\Delta)$ and $\Theta(W_{m-1})$ and $\Theta(W_{m})$.
\end{proposition}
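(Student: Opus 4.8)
The plan is to produce the three bijections $\Theta(N)\leftrightarrow\Theta(\Delta)$, $\Theta(\Delta)=\Theta(W_m)$, and $\Theta(W_m)\leftrightarrow\Theta(W_{m-1})$; composing them links all four sets. The conceptual heart is the first, which I would obtain by reading a graph of $W_m$ as a choice of \emph{pairing} at every node of $N$. Recall that each $w\in W_m$ arises from $\hat N$ by a sequence of $m$ binary decisions, one per node: at $\phi(i)$ we either keep the edges $t,v$ (producing $\ddot w$) or keep the edges $u,z$ (producing $\dot w$), turning the survivors black and deleting the other two. Because the two incoming edges of $\phi(i)$ feed the cluster $\Gamma(\phi(i))$ on one side and its two outgoing edges leave on the other, keeping $t,v$ glues the two incoming edges to the two outgoing edges one way, and keeping $u,z$ glues them the other way. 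Thus a graph $w\in W_m$ is exactly the datum of a pairing, at each node of $N$, of its incoming edges with its outgoing edges.

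First I would observe that in $W_m$ every edge has the same colour, so (recall that when all edges share one colour an Eulerian cycle traverses each exactly once) every node has indegree and outdegree $1$ and $w$ is a disjoint union of directed closed walks in $N$: namely the decomposition of the edge set of $N$ determined by the chosen pairings. Hence $w$ is connected, i.e.\ $w\in\Delta$, precisely when the pairings trace out a single closed walk covering every edge of $N$ once, which is the same as an Eulerian cycle of $N$. This gives a bijection between $\Theta(N)$ and $\Delta$: an Eulerian cycle of $N$ passes through each node exactly twice and so induces a pairing at every node, hence a connected $w\in\Delta$; conversely the unique closed walk recorded by a connected $w$ is an Eulerian cycle of $N$, and distinct pairings yield distinct cycles. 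Since every $w\in\Delta$ has exactly one Eulerian cycle, $\Delta$ is identified with $\Theta(\Delta)$, and we obtain $\Theta(N)\leftrightarrow\Theta(\Delta)$.

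The identification $\Theta(\Delta)=\Theta(W_m)$ is immediate, since a disconnected graph in $W_m$ has no Eulerian cycle, so $\Theta(W_m)=\bigcup_{w\in W_m}\Theta(w)=\bigcup_{w\in\Delta}\Theta(w)=\Theta(\Delta)$. For $\Theta(W_m)\leftrightarrow\Theta(W_{m-1})$ I would use the last doubling step, in which only $\phi(m)$ is still unresolved. As noted in the remark preceding Corollary \ref{cor_rel_iterated_sets}, at this step the surrounding structure is rigid ($A=B$ and $C=D$), so every connected $w\in W_{m-1}$ carries exactly one Eulerian cycle and, of its two children $\dot w,\ddot w\in W_m$, exactly one is connected (hence lies in $\Delta$) while the other is disconnected. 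Sending the Eulerian cycle of each connected $w\in W_{m-1}$ to the Eulerian cycle of its connected child is then a bijection $\Theta(W_{m-1})\leftrightarrow\Theta(W_m)$, whose inverse recovers the parent.

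The step I expect to be the main obstacle is the equivalence, inside $\Theta(N)\leftrightarrow\Delta$, between connectivity of $w$ and the pairings forming a single Eulerian cycle: this needs a careful check that gluing the original edges of $N$ to the two black edges surviving at each resolved node reproduces exactly the cycle decomposition induced by the pairing, and that the map from Eulerian cycles to pairings is genuinely invertible, each pairing yielding one closed walk coming from a unique Eulerian cycle. A secondary delicate point is the double degeneracy $A=B$, $C=D$ at the final step, which is what makes $\Theta(W_{m-1})\leftrightarrow\Theta(W_m)$ a true bijection rather than the two-to-one correspondence of Proposition \ref{pr_half_of_euler_paths}; I would justify it from the fact that all nodes except those in $\Gamma(\phi(m))$ already have indegree and outdegree $1$, so the two output nodes of $\phi(m)$ are joined to its two input nodes by two uniquely determined paths.
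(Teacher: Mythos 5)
Your proposal is correct and follows essentially the same route as the paper: the chain $\Theta(W_{m-1})\leftrightarrow\Theta(W_m)=\Theta(\Delta)\leftrightarrow\Theta(N)$, with the last step handled via the degeneracy $A=B$, $C=D$ exactly as in the paper's proof. The only cosmetic difference is that you describe the correspondence $\Theta(N)\leftrightarrow\Theta(\Delta)$ in the direction of pairings/transition systems at each node, whereas the paper exhibits the inverse map by reading off the even-indexed (black) edges of the unique Eulerian cycle of $w\in\Delta$; these are two descriptions of the same bijection.
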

\begin{proof}
Given a connected graph $w\in W_{m-1}$, then just one graph of $\dot w$ and $\ddot w$ is connected. Let us say it is $\dot w$. Recall that there is exactly one Eulerian cycle $AtCuCvAz$ in $w$ ($A=B$ and $C=D$, see Figure \ref{fg_edges_repl_b}). Then $AtCv$ is the only Eulerian cycle in $\dot w\in \Delta \subset W_{m}$. This shows a bijection between $\Theta(W_{m-1})$ and $\Theta(W_{m})$ and $\Theta(\Delta)$.

Let $\bar p = p_1p_2\dots p_{4m}$ be the only Eulerian cycle in $w \in \Delta$, where $p_i$ are edges of $w$. Without loss of generality suppose that $p_1\in \Pi(a)$ for some $a\in U(N)$ (it means that $p_1$ is a blue edge in $\hat N$). It follows that all $p_i$ with $i$ odd are blue edges in $\hat N$ all $p_i$ with $i$ even are edges from $N$ (they are black edges in $\hat N$); in consequence the path $p=p_2p_4\dots p_{4m}$ is an Eulerian cycle in $N$. A turning the Eulerian cycle in $w$ into the Eulerian cycle $p$ in $N$ is schematically depicted on Figure \ref{fg_node_replacement_to_simple}. Thus we have a bijection between $\Theta(N)$ and $\Theta(\Delta$). 
This ends the proof.
\end{proof}

\begin{figure}
\centering
\includegraphics[width=2.0in]{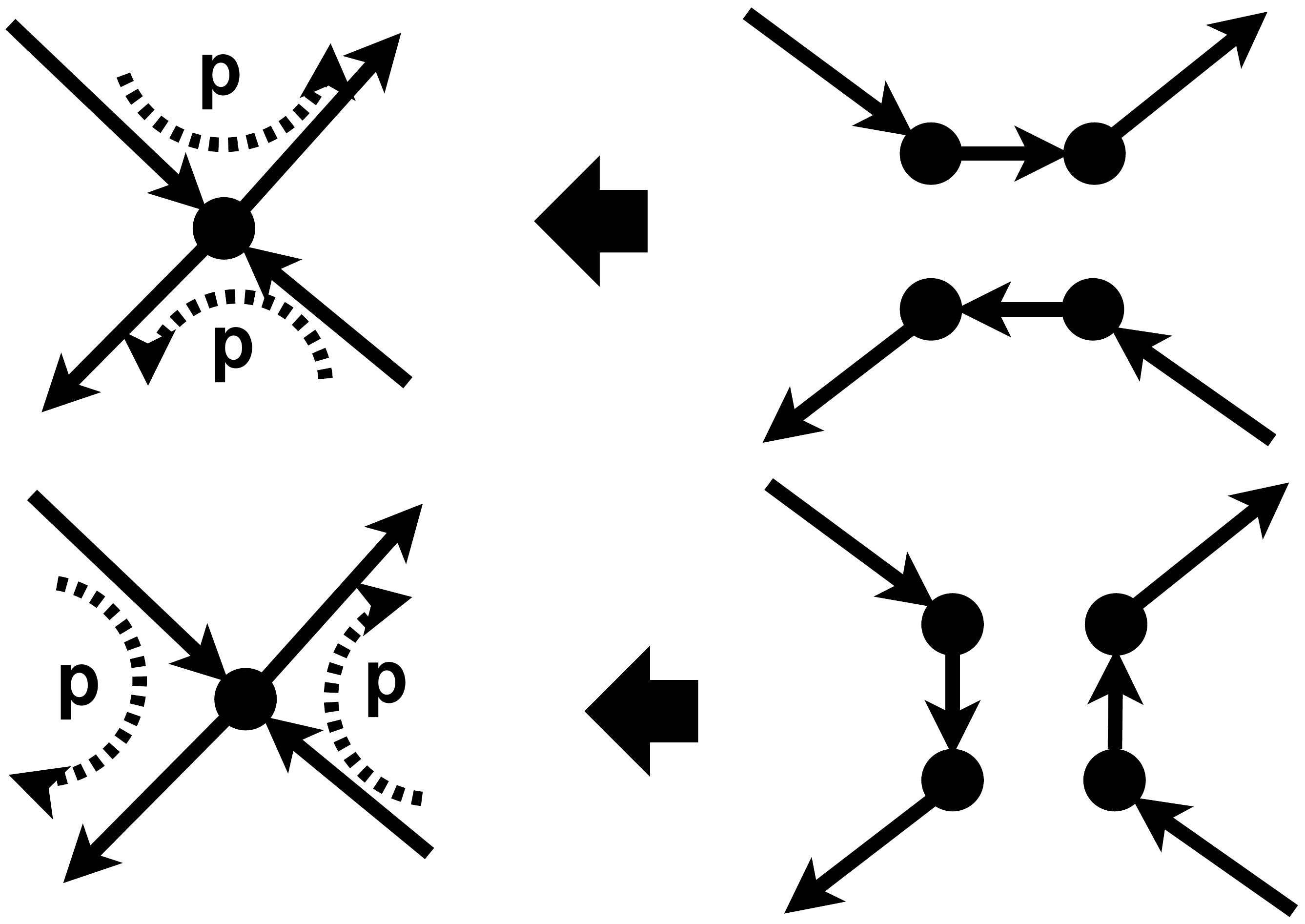} 
\caption{Converting a an Eulerian cycle from $\Delta$ into an Eulerian cycle in $N$}
\label{fg_node_replacement_to_simple}
\end{figure}

\begin{corollary}
Let $N$ be a T-net of order $m$. Then $\vert N\vert\leq 2^{m-1}$ Eulerian cycles.
\end{corollary}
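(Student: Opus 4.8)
The plan is to chain together the bijections and counts already established, being careful to distinguish the Eulerian-cycle count $\vert\cdot\vert$ from the number of graphs in a set. First I would invoke Proposition \ref{pr_bij_n_delta}, which gives a bijection between $\Theta(N)$ and $\Theta(\Delta)$, so that $\vert N\vert=\vert\Delta\vert$. Since every graph $w\in\Delta\subseteq W_m$ has all nodes of indegree and outdegree equal to $1$, each such $w$ contains exactly one Eulerian cycle; hence $\vert\Delta\vert=\sum_{w\in\Delta}\vert w\vert$ coincides with the cardinality of $\Delta$ regarded simply as a set of graphs. The corollary thus reduces to bounding how many graphs the set $\Delta$ contains.

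Next I would count $\Delta$ one iteration earlier. In the proof of Proposition \ref{pr_bij_n_delta} it is shown that for each connected graph $w\in W_{m-1}$ exactly one of $\dot w,\ddot w$ is connected, and that connected graph is the member of $\Delta$ associated to $w$; moreover a disconnected graph cannot become connected after deleting edges, so every element of $\Delta$ arises this way. Consequently the graphs in $\Delta$ are in bijection with the connected graphs of $W_{m-1}$, and in particular the number of graphs in $\Delta$ is at most the total number of graphs in $W_{m-1}$.

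It then remains to bound the size of $W_{m-1}$ as a set of graphs, which is immediate from the recursive definition $W_i=\{\dot w,\ddot w\mid w\in W_{i-1}\}$: each graph of $W_{i-1}$ spawns at most two graphs of $W_i$, so passing from $W_{i-1}$ to $W_i$ at most doubles the number of graphs. Since $W_0=\{\hat N\}$ consists of a single graph, after $m-1$ iterations $W_{m-1}$ contains at most $2^{m-1}$ graphs. Combining the three steps yields $\vert N\vert=\vert\Delta\vert\le 2^{m-1}$.

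The main obstacle is purely one of bookkeeping, not of genuine difficulty: the symbol $\vert\cdot\vert$ measures Eulerian cycles, a quantity that \emph{decreases} through the iterations by Corollary \ref{cor_rel_iterated_sets}, whereas the quantity driving this bound is the number of graphs in each $W_i$, which \emph{increases}. The inequality, rather than an equality, enters exactly because two distinct parents $w,w'\in W_{i-1}$ could conceivably yield the same child graph, and because some graphs in $W_{m-1}$ are disconnected and contribute nothing to $\Delta$; neither possibility harms the upper bound, so no finer analysis of these degeneracies is needed.
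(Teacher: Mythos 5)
Your proof is correct and follows essentially the same route as the paper: both arguments rest on the bijection $\Theta(N)\leftrightarrow\Theta(\Delta)$ from Proposition \ref{pr_bij_n_delta}, the fact that each connected graph in $W_{m-1}$ (equivalently, each graph in $\Delta$) carries exactly one Eulerian cycle, and the observation that $W_{m-1}$ contains at most $2^{m-1}$ graphs because each iteration at most doubles the set starting from $W_0=\{\hat N\}$. The only cosmetic difference is that the paper routes the final step through $\vert W_{m-1}\vert=\vert W_m\vert$ and $\Delta\subseteq W_m$, whereas you pass through the bijection between $\Delta$ and the connected graphs of $W_{m-1}$; the content is identical.
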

\begin{proof}
The set $W_{m-1}$ contains $2^{m-1}$ graphs and recall that every connected graph $w\in W_{m-1}$ has exactly one Eulerian cycle. The result follows then from $\vert W_{m-1}\vert=\vert W_{m}\vert$ and $\Delta\subseteq W_{m}$.
\end{proof}

\section{Bijection of binary sequences and de Bruijn sequences}

Given $i\in \{1,2,\dots ,m\}$, in the previous section we agreed the edges from $\Pi(\phi(i))$ are denoted by $t,u,v,z$, in such a way that $t$ and $v$ are not incident edges (and consequently that $u$ and $z$ are not incident as well). For this section we need that these edges are ordered, hence let us suppose that it holds $t<u<v<z$. This will allow us to identify "uniquely" the edges.

Let us look again on the Figure \ref{fg_edges_repl_a}.
We can identify the path $A$ as the path between incident nodes of the edge $z$ that do not contain edges $t,u,v$. In a similar way we can identify $B,C,D$. 

On the Figure \ref{fg_edges_repl_b} we can not distinguish $A$ from $B$ and $C$ from $D$ only by edges $t,u,v,z$. If $A\not =B$, then let $\delta$ be the first node where $A$ and $B$ differ. The node $\delta$ has two outgoing blue edges, let us say they are $t,z$. We use this difference to distinguish $A$ and $B$. Let us define $A$ to be the path that follows the edge $t$ from $\delta$ and $B$ the path that follows the edge $z$ from $\delta$. Again in a similarly way we can distinguish $C$ from $D$. Hence let us suppose we have an "algorithm" that splits an Eulerian cycle $p\in \Theta(W_i)$  into the paths $A,B,C,D$ and edges $t,u,v,z \in \Pi(\phi(i))$ for given $N,i$ (recall that the nodes of $N$ are ordered and thus $i$ determines the node $\phi(i)\in U(N)$). We introduce the function $\omega_{N,i}: (p,\alpha)\rightarrow \Theta(W_{i-1})$, where
\begin{itemize}
\item
$N$ is a T-net of order $m$
\item
$i\in \{1,\dots,m-1\}$ 
\item
$p\in \Theta(W_i)$
\item
$\alpha\in \{0,1\}$
\end{itemize}

\begin{remark}
Less formally said, the function $\omega$ transform an Eulerian cycle $p\in \Theta(W_{i})$ into an Eulerian cycle $\bar p\in \Theta(W_{i-1})$ for given $N,i,\alpha$.
\end{remark}

\noindent
Given $N$ and $i$, we define for the case I (Figure \ref{fg_edges_repl_a}):\\
$\omega_{N,i}(AzDuCuBz,0)=AtBzDuCv$\\
$\omega_{N,i}(AzDuCuBz,1)=AtCuBzDv$\\
$\omega_{N,i}(AtBtCvDv,0)=AtCvDuBz$\\
$\omega_{N,i}(AtBtCvDv,1)=AzDuBtCv$\\
For the case II (Figure \ref{fg_edges_repl_b}), where $A\not=B$ and $C\not =D$:\\
$\omega_{N,i}(AtCvBtDv,0)=AtCuDvBz$\\
$\omega_{N,i}(AtCvBtDv,1)=AzBtCuDv$\\
$\omega_{N,i}(AtDvBtCv,0)=AtDuCvBz$\\
$\omega_{N,i}(AtDvBtCv,1)=AzBtDuCv$\\
For the case II where $A=B$ and $C\not =D$:\\
$\omega_{N,i}(AtCvAtDv,0)=AtCuDvAz$\\
$\omega_{N,i}(AtCvAtDv,1)=AtDuCvAz$\\
For the case II where $A\not =B$ and $C=D$:\\
$\omega_{N,i}(AtCvBtCv,0)=AtCuCvBz$\\
$\omega_{N,i}(AtCvBtCv,1)=AzBtCuCv$\\
Now, when we fixed an order on edges at the beginning of this section, it is necessary to distinguish another possibility in the case II, namely the paths $A,B$ can be paths between incident nodes of the edge $t$ that do not contain edges $u,v,z$ and $C,D$ can be paths between incident nodes of the edge $v$ that do not contain edges $t,u,z$. Obviously, in this case it is possible to define $\omega$ in a similar way. To save some space we do not present an explicit definition.

\begin{remark}
The previous definition of $\omega_{N,i}(p,\alpha)$ can be modified with regard to the reader's needs, including the way of recognition of paths $A,B,C,D$. It matters only that $\omega_{N,i}$ is injective. Our definition is just one possible way.
\end{remark}

\begin{remark}
To understand correctly the definition of $\omega$, recall that when comparing two Eulerian cycles, it does not matter which edge is written as the first one. For example 
the paths $AtCuDvAz$ and $AzAtCuDv$ are an identical Eulerian cycle. 
\end{remark}

Let $S(n)$ denote the set of all binary sequences of length $n$.

\begin{proposition}
\label{pr_bij_doublig_tnet}
Let $N$ be a T-net of order $m$, $s=s_1s_2\dots s_{m-1} \in S(m-1)$ be a binary sequence, and $p\in \Theta(N)$.
We define $p=p^{m-1}$ and $p^{i-1}=\omega_{N,i}(p^{i},s_i)$, where $i \in \{1,2,\dots ,m-1\}$.
Then the map $\nu : \Theta(N)\times S(m-1)\rightarrow \Theta(N^*)$ defined as $\nu(p,s)=p^{0}$ is a bijection.
\end{proposition}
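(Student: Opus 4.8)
The plan is to exhibit $\nu$ as a composition of bijections: the two ``endpoint'' identifications already established, sandwiching the iterated application of the maps $\omega_{N,i}$. By Proposition~\ref{pr_bij_n_delta} there is a bijection $\beta:\Theta(N)\to\Theta(W_{m-1})$ (compose $\Theta(N)\cong\Theta(\Delta)=\Theta(W_m)\cong\Theta(W_{m-1})$), and the passage between $\hat N$ and $N^*$ gives the natural bijection $\gamma:\Theta(W_0)=\Theta(\hat N)\to\Theta(N^*)$. Under these identifications the recursion $p^{m-1}=\beta(p)$, $p^{i-1}=\omega_{N,i}(p^i,s_i)$, $\nu(p,s)=\gamma(p^0)$ presents $\nu$ as
$$\Theta(N)\times S(m-1)\xrightarrow{\beta\times\mathrm{id}}\Theta(W_{m-1})\times S(m-1)\to\cdots\to\Theta(W_0)\xrightarrow{\gamma}\Theta(N^*),$$
the intermediate arrows peeling off $s_{m-1},\dots,s_1$ one at a time through $\omega_{N,m-1},\dots,\omega_{N,1}$. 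So everything reduces to showing each $\omega_{N,i}:\Theta(W_i)\times\{0,1\}\to\Theta(W_{i-1})$ is a bijection.

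First I would record that $\omega_{N,i}$ is well defined and fibre preserving. The $2^i$ graphs making up $W_i$ are pairwise distinct, so $\Theta(W_i)=\bigsqcup_{w\in W_{i-1}}\big(\Theta(\dot w)\sqcup\Theta(\ddot w)\big)$ and $\Theta(W_{i-1})=\bigsqcup_{w\in W_{i-1}}\Theta(w)$; inspecting the defining equations shows that a cycle of $\dot w$ or of $\ddot w$ is sent into $\Theta(w)$ for the same parent $w$ (the output reinstates $t,u,v,z$ as the four blue edges at $\phi(i)$ and alters nothing else). Thus $\omega_{N,i}$ splits as a disjoint union of fibre maps $\big(\Theta(\dot w)\sqcup\Theta(\ddot w)\big)\times\{0,1\}\to\Theta(w)$, and it suffices to treat one fibre at a time. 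Since domain and codomain are finite of equal size --- $\vert\Theta(W_i)\vert\cdot 2=\vert\Theta(W_{i-1})\vert$ by Corollary~\ref{cor_rel_iterated_sets} (equivalently by Proposition~\ref{pr_half_of_euler_paths}, which gives $\vert w\vert=2\vert\dot w\vert+2\vert\ddot w\vert$) --- it will be enough, as the remark after the definition of $\omega$ stresses, to prove that $\omega_{N,i}$ is injective.

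This injectivity is the step I expect to be the main obstacle, and it is exactly where the order $t<u<v<z$ fixed at the start of the section does the work. The idea is that the defining equations can be inverted: given an output cycle $\bar p\in\Theta(w)$, deleting the four blue edges at $\phi(i)$ breaks $\bar p$ into four arcs, and the deterministic splitting procedure of the section labels them as $A,B,C,D$ --- using incidence with the ordered edges in Case~I, and the first node $\delta$ at which two candidate arcs diverge to resolve the $A\leftrightarrow B$ and $C\leftrightarrow D$ ambiguities in Case~II. I would verify that feeding each listed output back through this procedure returns precisely the $(A,B,C,D,t,u,v,z)$ that defined it, and that the combinatorial patterns separating the cases --- Case~I, non-degenerate Case~II, the two degenerate Case~II sub-cases $A=B$ and $C=D$, and the additional Case~II possibility flagged at the end of the section (with $A,B$ separated by $t$ and $C,D$ by $v$) --- are mutually exclusive and jointly exhaustive. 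This produces a single-valued inverse on each fibre, hence injectivity; the verification is a finite case check but must be carried out exhaustively.

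With each $\omega_{N,i}$ a bijection, I would finish by a downward induction on $i$ showing that $(p^i,s_1\cdots s_i)\mapsto p^0$ is a bijection $\Theta(W_i)\times S(i)\to\Theta(W_0)$. The base case $i=0$ is the identity, and the inductive step factors the map through $\omega_{N,i}\times\mathrm{id}$ under the identification $S(i)\cong S(i-1)\times\{0,1\}$ sending $s_1\cdots s_i$ to $(s_1\cdots s_{i-1},s_i)$, composing the bijection $\Theta(W_i)\times\{0,1\}\times S(i-1)\to\Theta(W_{i-1})\times S(i-1)$ with the inductive bijection $\Theta(W_{i-1})\times S(i-1)\to\Theta(W_0)$. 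Taking $i=m-1$, and pre- and post-composing with the bijections $\beta\times\mathrm{id}$ and $\gamma$, shows that $\nu$ is a bijection.
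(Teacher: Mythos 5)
Your proposal is correct and follows essentially the same route as the paper: identify $\Theta(N)$ with $\Theta(W_{m-1})$ via Proposition~\ref{pr_bij_n_delta}, get injectivity of $\nu$ from the injectivity of each $\omega_{N,i}$ (read off from its defining equations and the deterministic labelling of $A,B,C,D$), and get surjectivity from the cardinality counts $\vert N\vert=\vert W_{m-1}\vert$ and $2^{m-1}\vert N\vert=\vert\hat N\vert=\vert W_0\vert$ supplied by Corollary~\ref{cor_rel_iterated_sets}. Your version is somewhat more explicit than the paper's (the fibre-preserving decomposition over $w\in W_{i-1}$ and the remark that the case distinctions must be mutually exclusive and exhaustive are spelled out rather than implicit), but the underlying argument is the same.
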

\begin{proof}
Recall that there is a bijection between $\Theta(N)$ and $\Theta(W_{m-1})$, see Proposition \ref{pr_bij_n_delta}; hence we can suppose that $p\in W_{m-1}$.\\
The definition of the function $\omega$ implies that $\omega_{N,i}(p, \alpha)=\omega_{N,i}(\bar p, \bar \alpha)$ if and only if $p=\bar p$ and $\alpha=\bar \alpha$. It follows that $\nu$ is injective. In addition we proved that $\vert N\vert=\vert W_{m-1}\vert$ and that $2^{m-1}\vert N\vert=\vert \hat N\vert=\vert W_0\vert$. In consequence $\nu$ is surjective and thus bijective.
\end{proof}

\end{document}